\newtheorem{theorem}{Theorem}
\newtheorem{proposition}{Proposition}[section]
\newtheorem{lemma}[proposition]{Lemma}
\DeclareMathOperator{\GL}{GL}
\newcommand{\threebar}[1]{{\left\vert\kern-0.25ex\left\vert\kern-0.25ex\left\vert #1 
    \right\vert\kern-0.25ex\right\vert\kern-0.25ex\right\vert}}
\newcommand{\transp}{\top}
\newcommand{\R}{\mathbb{R}}
\newcommand{\N}{\mathbb{N}}
\newcommand{\Z}{\mathbb{Z}}
\title{On dense intermingling of exact overlaps and the open set condition}
\author{Ian D. Morris}
\address{School of Mathematical Sciences, Queen Mary, University of London, Mile End Road, London E1 4NS, U.K.}
\email{i.morris@qmul.ac.uk }
\begin{document}

\begin{abstract}
We prove that certain families of homogenous affine iterated function systems in $\R^d$ have the property that the open set condition and the existence of exact overlaps both occur densely in the space of translation parameters. These examples demonstrate that in the theorems of Falconer and Jordan-Pollicott-Simon on the almost sure dimensions of self-affine sets and measures, the set of exceptional translation parameters can be a dense set. The proof combines results from the literature on self-affine tilings of $\R^d$ with an adaptation of a classic argument of Erd\H{o}s on the singularity of certain Bernoulli convolutions. Our result encompasses a one-dimensional example due to Kenyon which arises as a special case.
\end{abstract}
\maketitle


\section{Introduction and statement of results}

We recall that an \emph{iterated function system} acting on $\R^d$ is a tuple $(T_1,\ldots,T_N)$ of contractions of $\R^d$ with respect to some norm on $\R^d$, which in this article will always be the Euclidean norm. It is classical (see \cite{Hu81}) that for every such iterated function system there exists a unique nonempty compact set $X\subset \R^d$ with the property $X=\bigcup_{j=1}^N T_jX$ and this set is conventionally called the \emph{attractor} or \emph{limit set} of the iterated function system. The iterated function system $(T_1,\ldots,T_N)$ is said to satisfy the \emph{open set condition} if there exists a nonempty open set $U\subset \R^d$ such that $T_jU\subseteq U$ for every $j=1,\ldots,N$ and such that the distinct images $T_j U$, $T_kU$ have empty pairwise intersection. If the open set condition holds and every $T_j$ is given by a similarity transformation $T_jx=r_j O_jx+v_j$ with $r_j \in (0,1)$, $O_j \in O(d)$ and $v_j \in \R^d$, then it is well known that the Hausdorff dimension and upper and lower box dimensions of $X$ coincide and are all equal to the unique $s \in (0,d]$ such that $\sum_{j=1}^N r_j^s=1$, a value which is conventionally called the \emph{similarity dimension} of the iterated function system.
Since the classic work of J.E. Hutchinson in \cite{Hu81} there has been substantial interest in extending this result by weakening either the hypothesis of the open set condition, the hypothesis that the transformations $T_j$ should be similitudes, or both.

An iterated function system $(T_1,\ldots,T_N)$ acting on $\R^d$ is said to have \emph{exact overlaps} if for some pair of distinct words $j_1j_2 \cdots j_n \in \{1,\ldots,N\}^n$ and $k_1k_2\cdots k_m \in \{1,\ldots,N\}^m$ one has $T_{j_1}T_{j_2}\cdots T_{j_n}=T_{k_1}T_{k_2}\cdots T_{k_m}$. By considering the words $j_1\cdots j_n k_1\cdots k_m$ and $k_1\cdots k_mj_1\cdots j_n$ in place of $j_1\cdots j_n$ and $k_1\cdots k_m$ it may be assumed without loss of generality that $n=m$, and we will always make this reduction in the sequel. When $(T_1,\ldots,T_N)$ consists of similitudes and admits exact overlaps it is well known as folklore (and is explicitly articulated in \cite{SiSo02}) that the Hausdorff and box dimensions of the attractor are strictly smaller than the similarity dimension. In this work we will call an iterated function system for which the Hausdorff dimension of the attractor is smaller than the similarity dimension \emph{exceptional}. It is conjectured (see e.g. \cite{Ho20}) that for an iterated function system of similitudes the \emph{only} situations in which the system is exceptional are those in which either an exact overlap exists, or the similarity dimension exceeds the dimension of the smallest affine subspace of $\R^d$ which is preserved by every $T_j$. Both of the latter two circumstances are precluded \emph{a priori} when the open set condition is satisfied.

More generally, an \emph{affine iterated function system} is usually understood to be a tuple $(T_1,\ldots,T_N)$ of invertible affine transformations of $\R^d$ all of which are contracting with respect to some norm on $\R^d$. In this context a suitable analogue of the similarity dimension, now often called the \emph{affinity dimension}, was articulated by Falconer in \cite{Fa88}. The affinity dimension is always an upper bound for the upper box dimension of the attractor. According to one of the principal results of \cite{Fa88}, if a tuple of linear maps $A_1,\ldots,A_N \in \GL_d(\R)$ is specified and those maps are all sufficiently strongly contracting, then for  Lebesgue almost every choice of $(u_1,\ldots,u_N) \in (\R^d)^N$ the attractor of the iterated function system $(T_1,\ldots,T_N)$ defined by $T_jx:=A_jx+u_j$ has Hausdorff dimension and upper and lower box dimension equal to its affinity dimension.  On the other hand affine iterated function systems for which these dimensions are \emph{not} equal to the affinity dimension (which we again describe as ``exceptional'') are much easier to construct than in the self-similar case, as follows.

In the affine case the open set condition does not suffice to guarantee that the dimension of the attractor is nonzero (an observation which dates back to \cite{Ed92}). In the case of affine iterated function systems the existence of exact overlaps again guarantees exceptionality, although the required arguments are much more delicate than in the self-similar case and are correspondingly much more recent (see the introduction to \cite{Mo22}). As before, if an affine iterated function system preserves an affine subspace with dimension smaller than the affinity dimension then the affine subspace necessarily contains the attractor, so the dimension of the subspace bounds the dimension of the attractor and the iterated function system is necessarily exceptional. In the affine case a further possibility exists that the iterated function system may lack an invariant affine subspace but preserve a low-dimensional variety or manifold, a phenomenon which is investigated in \cite{BaKr11,FeKa18}. For example, if $T_1,\ldots,T_N \colon \R^3\to \R^3$ are contractions of the form
\[T\begin{pmatrix} x\\y\\z\end{pmatrix}:=\begin{pmatrix} \lambda_1\lambda_2& \lambda_1 v & \lambda_2 u \\ 0&\lambda_1&0\\ 0&0&\lambda_2 \end{pmatrix}\begin{pmatrix} x\\y\\z\end{pmatrix} + \begin{pmatrix} uv\\u\\v\end{pmatrix}\]
then the two-dimensional variety $\{(x,y,z) \in \R^3 \colon x=yz\}$ is preserved by the iterated function system $(T_1,\ldots,T_N)$, hence this variety contains the attractor and the dimension of the attractor cannot be higher than two. On the other hand as long as the parameters $\lambda_1$, $\lambda_2$, $u$, $v$ are chosen such that the fixed points of $T_1,\ldots,T_N$ do not belong to a common one- or two-dimensional affine subspace of $\R^3$ then no invariant affine subspace can exist, since any invariant subspace must contain these fixed points. Lastly, it may happen that  the open set condition holds, no proper submanifold of $\R^d$ is preserved by the iterated function system, no exact overlaps exist, and yet the iterated function system is nonetheless exceptional (see e.g. \cite{Be84,DaSi17,FeWa05,Fr12,Mc84}). A feature common to most such systems is the existence of a linear projection map $P\colon \R^d \to \R^d$ such that $PT_jP=PT_j$ for every $j=1,\ldots,N$ and such that the kernel of the projection $P$ is more strongly contracted by the transformations $T_j$ than is the image subspace of $P$. Remarkably, at least three of these four sources of exceptionality may hold robustly with respect to changes in the translation parameters $v_1,\ldots,v_N$ of the affine transformations $T_ix=A_ix+v_i$; see the introduction to \cite{Mo22} for further examples and discussion.

This note is concerned with the way in which three of the properties described above --  the open set condition, the existence of exact overlaps, and the property of being exceptional -- depend on the translation parameters of an affine iterated function system. More formally, given linear maps $A_1,\ldots,A_N \in \GL_d(\R)$ all of which are contracting with respect to some norm on $\R^d$, we ask about the structure of the set of tuples $(u_1,\ldots,u_N) \in (\R^d)^N$ such that the iterated function system $(T_1,\ldots,T_N)$ defined by $T_jx:=A_j+u_j$ is exceptional (or otherwise), or satisfies the open set condition, or admits exact overlaps. In this note we exhibit examples with the property that all of these phenomena are at once dense in the space of translation parameters. This family of examples may be constructed for every value of $d$, and in two dimensions includes \emph{strongly irreducible} examples for which the linear maps $A_1,\ldots,A_N$ do not preserve any finite union of lines through the origin in $\R^d$.

The purpose of this note is to prove the following theorem:
\begin{theorem}\label{th:main}
Let $A$ be a $d \times d$ matrix with integer entries and with all eigenvalues strictly larger than $1$ in modulus, define $N:=|\det A|$ and suppose that $N>2$. For each $u_1,\ldots,u_N \in \R^d$ define an iterated function system $(T_1,\ldots,T_N)$ acting on $\R^d$ by $T_jx:=A^{-1}x+u_j$ for every $j=1,\ldots,N$, and let $\nu$ denote the unique Borel probability measure on $\R^d$ which satisfies $\nu=\frac{1}{N}\sum_{j=1}^N(T_j)_*\nu$.  If $u_1,\ldots,u_N \in \bigcup_{m \geq 0} A^{-m}\Z^d$ then the associated iterated function system $(T_1,\ldots,T_N)$ satisfies exactly one of the following:
\begin{enumerate}[(i)]
\item\label{it:th1}
The open set condition is satisfied, the attractor has positive Lebesgue measure and is equal to the closure of its interior, and $\nu$ is the normalised restriction of Lebesgue measure to the attractor. 
\item\label{it:th2}
Exact overlaps exist, the attractor has upper box dimension strictly smaller than $2$ and the measure $\nu$ is singular.
\end{enumerate}
Additionally each of \eqref{it:th1} and \eqref{it:th2} is satisfied for a dense set of $(u_1,\ldots,u_N) \in  (\bigcup_{m \geq 0} A^{-m}\Z^d)^N$ and in particular holds for a dense set of $(u_1,\ldots,u_N) \in (\R^d)^N$.
\end{theorem}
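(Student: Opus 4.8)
The plan is to reduce the problem to the study of integral self-affine sets, to invoke the structure theory of self-affine tilings, to adapt Erd\H{o}s's Fourier-analytic singularity argument to close the one remaining gap, and to obtain the density assertions by a direct perturbation. First I would carry out a sequence of normalisations. Since each $u_j$ lies in some $A^{-m_j}\Z^d$, all of them lie in $A^{-m}\Z^d$ for $m:=\max_jm_j$; the linear isomorphism $x\mapsto A^mx$ commutes with $A^{-1}$ and conjugates the system to the homogeneous affine IFS with the same linear parts and translation vectors $A^mu_j\in\Z^d$, and such a conjugacy preserves the open set condition, the existence of exact overlaps, the Hausdorff and box dimensions of the attractor, and the Lebesgue-measure class of the attractor and of $\nu$. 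Hence we may assume $u_1,\dots,u_N\in\Z^d$, and after a common translation that $0\in\{u_1,\dots,u_N\}$. If two of the $u_j$ coincide then $T_j=T_k$ is an exact overlap, a case subsumed below; otherwise $D:=\{u_1,\dots,u_N\}$ consists of $N=|\det A|$ distinct integer vectors and $X=\{\sum_{k\ge1}A^{-k}d_k:d_k\in D\}$ is an integral self-affine set with digit set of cardinality $|\det A|$. Passing to the smallest sublattice $\Lambda$ of $\R^d$ with $A\Lambda\subseteq\Lambda$ and $D\subseteq\Lambda$, and conjugating by a linear isomorphism carrying $\Lambda$ onto $\Z^d$ (which replaces $A$ by an integer matrix of the same determinant and spectrum), we may assume $\Lambda=\Z^d$; in the degenerate case $\operatorname{rank}\Lambda=r<d$ the attractor lies in a proper $A^{-1}$-invariant subspace $V$, so $\overline{\dim}_{\mathsf B}X\le r<d$ and $\nu$ is singular, while counting the points of $A^{-n}\Lambda$ in a fixed ball of $V$ against the $N^n$ level-$n$ cylinders shows that $N>|\det(A|_V)|$ forces an exact overlap, so alternative \eqref{it:th2} holds there.

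Next I would invoke the theory of integral self-affine tiles (Bandt; Kenyon; Lagarias--Wang). With $D\subseteq\Z^d$, $|D|=N=[\Z^d:A\Z^d]$ and $\Z^d$ generated by $\{A^kd:k\ge0,\ d\in D\}$, the attractor has positive Lebesgue measure precisely when $D$ is a complete set of coset representatives for $\Z^d/A\Z^d$, and then $X$ has nonempty interior and equals the closure of its interior. In that case the identity $|X|=N\,|A^{-1}X|=\sum_j|T_jX|$ forces the $T_jX$ to be Lebesgue-disjoint, so the open set condition holds with $U=\Int X$, and normalised Lebesgue measure on $X$ satisfies $\mu=\tfrac1N\sum_j(T_j)_*\mu$, hence equals $\nu$ by uniqueness; this is alternative \eqref{it:th1}. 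When $D$ is not a complete residue system, the cited theory says this is equivalent to the presence of an exact overlap---distinct words $\mathbf a,\mathbf b$ of a common length $n$ with $\sum_{i=1}^nA^{n-i}u_{a_i}=\sum_{i=1}^nA^{n-i}u_{b_i}$---and, as I explain below, one also deduces $|X|=0$. From the overlap one reads off that the $N^n$ level-$n$ compositions $T_{\mathbf c}$ take only $M<N^n$ distinct values, and $X$ is the attractor of the affine IFS formed by those $M$ maps, all with linear part $A^{-n}$; its affinity dimension is strictly less than $d$ (at $s=d$ the defining pressure is negative, because the volume factor $|\det A^{-n}|=N^{-n}$ summed over the $M<N^n$ branches is $MN^{-n}<1$), and since the affinity dimension bounds the upper box dimension of the attractor we get $\overline{\dim}_{\mathsf B}X<d$; as $\nu$ is supported on $X$ and $|X|=0$, the measure $\nu$ is singular. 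This is alternative \eqref{it:th2}; the two alternatives are incompatible (positive measure versus box dimension $<d$) and, since $|X|$ is either positive or zero, exhaustive. The ingredient not immediate from the tiling literature, and the point at which Erd\H{o}s's method enters, is that failure of the complete-residue-system property forces $|X|=0$; the role played there by the Pisot property in Erd\H{o}s's theorem is taken over by the integrality of $A$. Writing $\widehat\nu(\xi)=\prod_{s\ge1}m\big((A^\top)^{-s}\xi\big)$ with $m(\xi)=\tfrac1N\sum_je^{2\pi i\langle\xi,u_j\rangle}$, the relation $u_j\in\Z^d$ gives $m\equiv1$ on $\Z^d$, whence $\widehat\nu\big((A^\top)^\ell p\big)=\widehat\nu(p)$ for all $p\in\Z^d$ and $\ell\ge0$; failure of the complete-residue-system property is precisely the statement that $m$ does not vanish on all non-zero classes of $(A^\top)^{-1}\Z^d/\Z^d$, and choosing $\eta_0$ in such a class while using $|m(\eta)|\ge1-C|\eta|^2$ for small $\eta$ to control the tail of the product, one obtains $\widehat\nu(p_0)\ne0$ for an appropriate $p_0\in\Z^d\setminus\{0\}$. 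Then $\widehat\nu\big((A^\top)^\ell p_0\big)=\widehat\nu(p_0)$ is bounded away from $0$ along $(A^\top)^\ell p_0\to\infty$, so by Riemann--Lebesgue $\nu$ is not absolutely continuous, and being carried by $X$ this forces $|X|=0$.

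For the density statement, since $A$ is expanding the covering radius of $A^{-m}\Z^d$ tends to $0$, so $\bigcup_{m\ge0}A^{-m}\Z^d$ is dense in $\R^d$ and it suffices to prove density of \eqref{it:th1} and of \eqref{it:th2} within $\big(\bigcup_{m\ge0}A^{-m}\Z^d\big)^N$. Given a target tuple and $\varepsilon>0$, I would round each coordinate to a nearby point of $A^{-m}\Z^d$ with $m$ large and conjugate by $A^m$ to obtain an integer tuple $(w_1,\dots,w_N)$, observing that for $m$ large the addition of a bounded integer vector to any $w_j$ moves the corresponding original coordinate by at most $\varepsilon$. Forcing $w_1=w_2$ produces a tuple with an exact overlap---an instance of \eqref{it:th2}---while replacing the $w_j$ by distinct representatives of the $N$ classes of $\Z^d/A\Z^d$ produces a complete residue system---an instance of \eqref{it:th1} by the previous step. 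Both modifications stay within $\varepsilon$ of the target, so each of \eqref{it:th1}, \eqref{it:th2} is dense in $\big(\bigcup_{m\ge0}A^{-m}\Z^d\big)^N$, and therefore in $(\R^d)^N$. The hardest part will be the Erd\H{o}s-type step, and in particular guaranteeing that the infinite product defining $\widehat\nu(p_0)$ contains no vanishing factor---in Erd\H{o}s's original argument this is exactly where the Pisot condition is used---together with making the interface with the self-affine tiling literature precise enough that failure of the complete-residue-system property delivers both $|X|=0$ and a genuine exact overlap across the full range of the theorem, including the strongly irreducible two-dimensional examples.
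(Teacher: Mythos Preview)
Your proposal contains a genuine gap in the dichotomy step. You organise the two cases as ``$D$ is a complete residue system modulo $A\Z^d$'' versus ``$D$ is not'', asserting that (after your lattice reduction) positive Lebesgue measure of the attractor occurs precisely in the former case, and that the latter is equivalent to an exact overlap. Both assertions are false. Take $d=1$, $A=4$, $D=\{0,1,8,9\}$: here $D\equiv\{0,1,0,1\}\pmod 4$ is not a complete residue system, the lattice generated by $\{A^kd:k\geq0,\,d\in D\}$ is all of $\Z$ so your reduction is trivial, yet one checks directly that the level-$n$ sums $\sum_{t=0}^{n-1}4^t d_{j_t}$ are pairwise distinct (so no exact overlap) and that the attractor is $X=[0,1]\cup[2,3]$, which has positive measure. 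Your Erd\H{o}s step cannot salvage this: for this example $\nu$ is normalised Lebesgue measure on $[0,1]\cup[2,3]$, so $\widehat\nu(p)=0$ for every nonzero integer $p$, and no choice of $p_0$ works. The claim that ``the cited theory says [failure of the residue-system property] is equivalent to the presence of an exact overlap'' is not in Bandt, Kenyon, or Lagarias--Wang.

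The paper bases the dichotomy directly on the presence or absence of exact overlaps, which is what you should do as well. If overlaps exist, your affinity-dimension argument is correct and gives $\overline{\dim}_{\mathsf B}X<d$, hence $|X|=0$ and $\nu$ singular; no Erd\H{o}s input is needed here. If no overlap exists, the paper observes that the level-$n$ integer sums $\sum_{t=0}^{n-1}A^t u_{j_{t+1}}$ are then $N^n$ \emph{distinct} lattice points, so their increasing union is uniformly discrete, and the implication (iv)$\Rightarrow$(iii) of Lagarias--Wang's Theorem~1.1 yields that $X$ is the closure of its interior with null boundary; the open set condition and $\nu=\lambda|_X$ follow. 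No residue-system hypothesis enters.

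Your density arguments, by contrast, are fine and in one place simpler than the paper's. For \eqref{it:th1} you perturb to a complete residue system, which is exactly the paper's set $\mathcal{U}$ (complete residue system does imply no overlap, which is the easy direction). For \eqref{it:th2} you force $u_1=u_2$, giving an exact overlap at level $1$; this is more elementary than the paper's route, which instead proves that a dense Fourier-analytic set $\mathcal{V}_w$ of parameters yields singular $\nu$ via an Erd\H{o}s-type bi-infinite product. The paper's detour through $\mathcal{V}_w$ does buy something---it produces exceptional parameters with all $u_j$ distinct---but for the bare density statement your shortcut suffices.
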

If $\|A^{-1}\|<\frac{1}{2}$ then the theorem of Falconer from \cite{Fa88} (in its modified form which was introduced by Solomyak in  \cite{So98}) may be applied to the above system to show that for Lebesgue almost every $(u_1,\ldots,u_N) \in (\R^d)^N$ the attractor of $(T_1,\ldots,T_N)$ has Hausdorff dimension precisely $d$. Likewise a theorem of Jordan, Pollicott and Simon in \cite{JoPoSi07} applies to show that the dimension of the uniform self-affine measure $\nu$ defined by $(T_1,\ldots,T_N)$ is equal to $d$ for almost all translation parameters.  A recent result of D.-J. Feng \cite{Fe19} also implies that the set of non-exceptional parameters $(u_1,\ldots,u_N)$ in both of these theorems contains a dense $G_\delta$ set as well as having full Lebesgue measure. Theorem \ref{th:main} thus illustrates on the other hand that the set of exceptional translation parameters in these two theorems is in some cases dense in the parameter space, a possibility which does not seem to have been explicitly noted before.  We remark that the special case of Theorem \ref{th:main} in which $d=1$ and in which $A$ is equal to the $1\times 1$ matrix with $3$ as its sole entry was previously obtained by R. Kenyon in \cite[\S2]{Ke97}, but its connection with Falconer's theorem was not remarked on in that work.

An iterated function system $(T_1,\ldots,T_N)$ of the form $T_jx=A_ix+v_i$ is often called \emph{reducible} if the matrices $A_i$ all preserve a common subspace $V$ of $\R^d$ such that $0<\dim V<d$ and \emph{irreducible} otherwise; the system is called \emph{strongly irreducible} if the matrices $A_i$ do not preserve any finite union of such subspaces. Strong irreducibility is a requirement of several known sufficient conditions for the Hausdorff dimension of the attractor to equal the affinity dimension (most notably \cite{BaHoRa19,HoRa19} as well as a number of now-subsumed earlier results) and examples where the Hausdorff dimension does \emph{not} equal the affinity dimension are frequently reducible (see for example \cite{Be84,DaSi17,FeWa05,Mc84,PrUr89}) or at least fail to be strongly irreducible (\cite{Fr12}).  When $d>2$ the matrix $A$ in Theorem \ref{th:main} necessarily admits a real invariant subspace and in those cases the iterated function system $(T_1,\ldots,T_N)$ is consequently reducible. On the other hand when $d=2$ strong irreducibility occurs whenever $A$ has non-real eigenvalues which are not proportional to roots of unity. A strongly irreducible example with $d=2$ and $\|A^{-1}\|=1/\sqrt{N}=1/\sqrt{5}$ is illustrated in Figure \ref{fi:only}.

\begin{figure}%
    \begin{center}
    \subfloat[(i). A strongly irreducible planar system satisfying the open set condition.]{{\includegraphics[width=5.9cm]{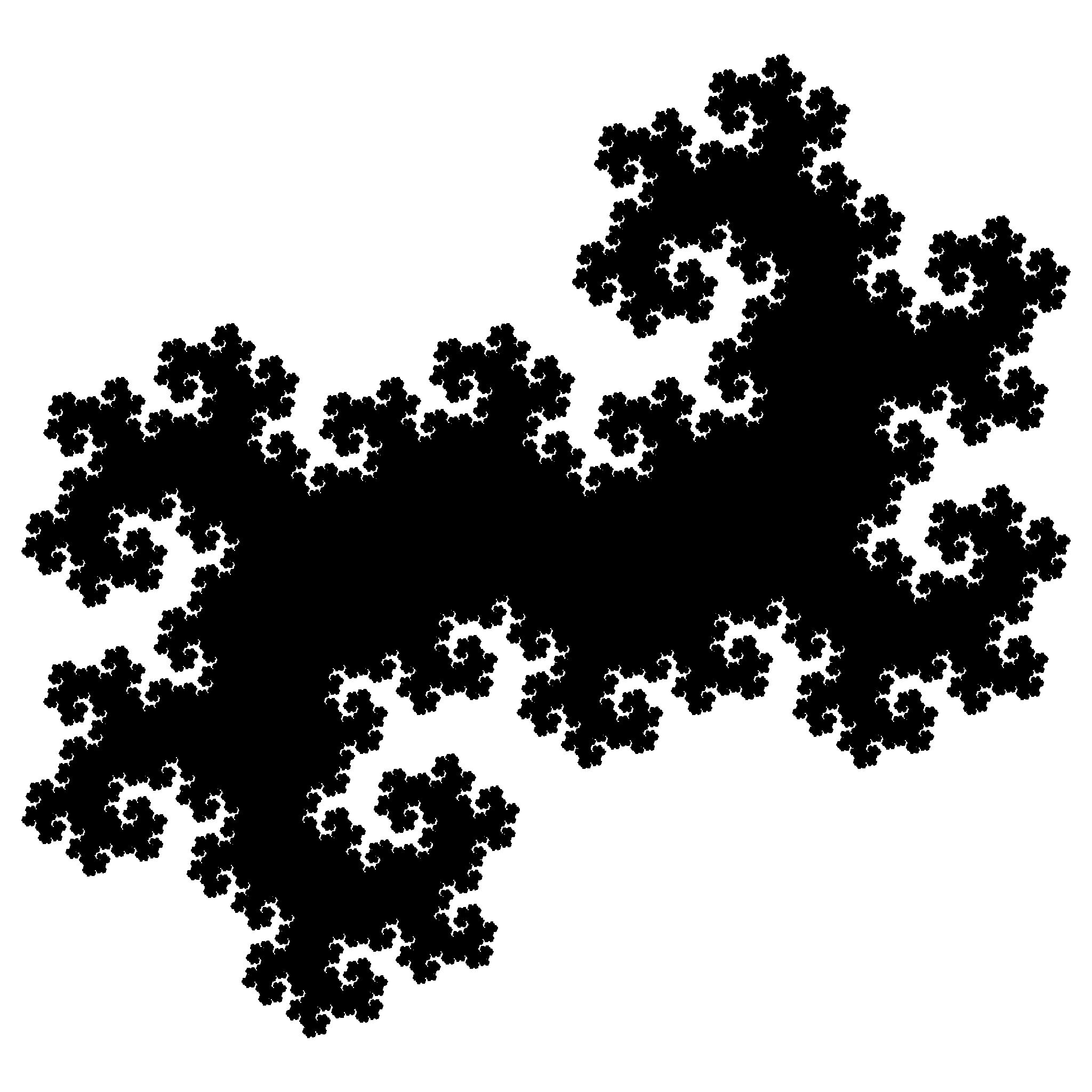}}}
    \quad\quad
    \subfloat[(ii). The same system with different translation parameters and with exact overlaps.]{{\includegraphics[width=5.9cm]{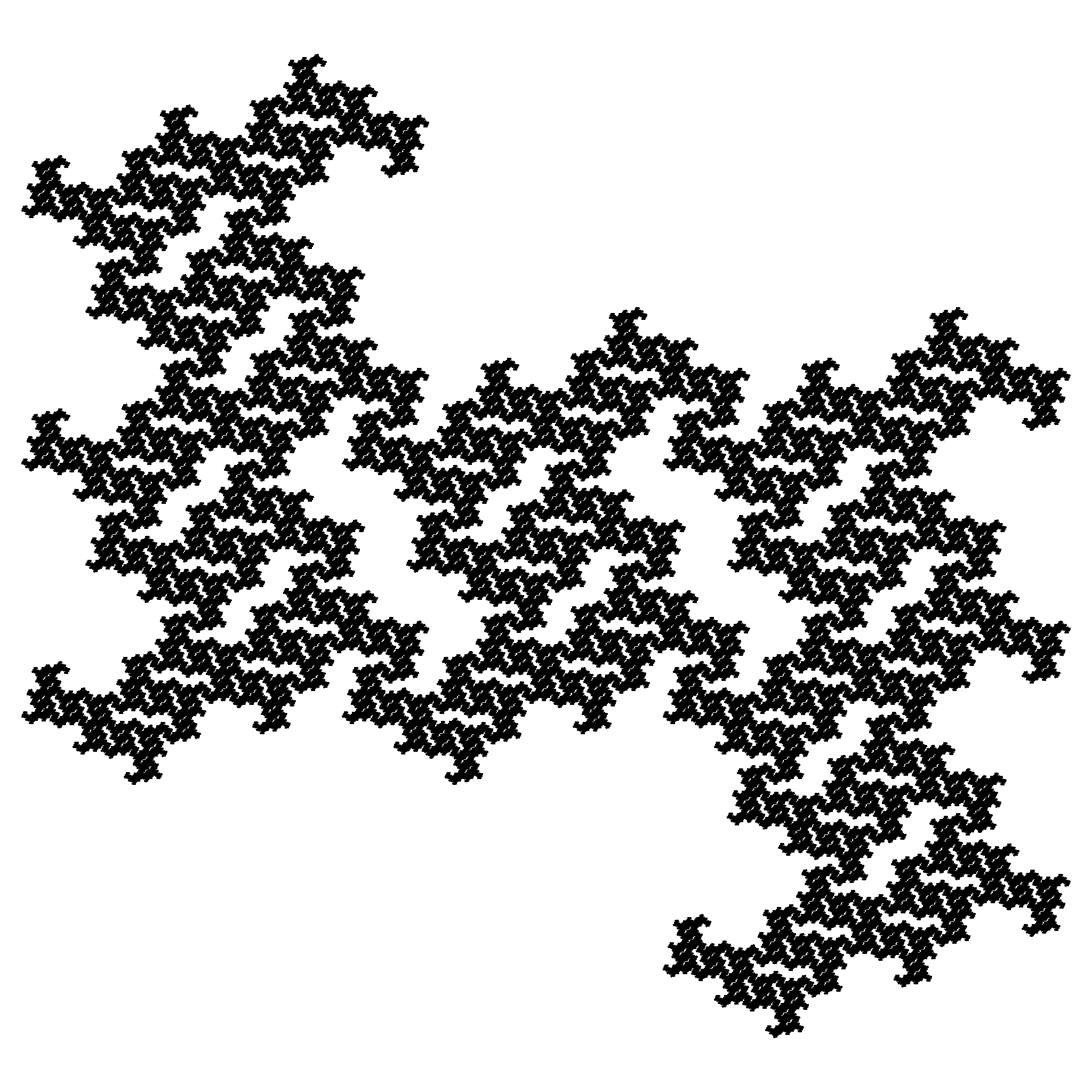}}}
    \caption[.]{Attractors of two iterated function systems of the type described in Theorem \ref{th:main}
    \label{fi:only}. In the first example we have\medskip\leavevmode\\
    \begin{minipage}{\linewidth}
\[
    A=\begin{pmatrix} 1&-2\\2&1\end{pmatrix},\quad u_1=\begin{pmatrix}-1\\-1\end{pmatrix}, \quad u_2=\begin{pmatrix}-1\\0\end{pmatrix},\]
   \[ u_3=\begin{pmatrix}0\\0\end{pmatrix}  ,\quad u_4=\begin{pmatrix}1\\0\end{pmatrix} ,\quad u_5=\begin{pmatrix}1\\1\end{pmatrix}.\]
\end{minipage}

\medskip
   In the second example the same parameters are used except that the sign of the second entry of each $u_j$ is reversed.
    }
    \end{center}%
\end{figure}

The method of proof of Theorem \ref{th:main} is as follows. Firstly, by an affine change of co-ordinates every iterated function system of the form $T_jx \equiv A^{-1}x+u_j$ with translation vectors in $ \bigcup_{m \geq 0}A^{-m}\Z^d$ is equivalent to such a system with every $u_j$ belonging to $\Z^d$ and with one of the vectors $u_j$ being zero. In this context it is classical that positive Lebesgue measure of the attractor is equivalent to the absence of exact overlaps (see \cite{LaWa96a}) and the remaining details of the dichotomy into \eqref{it:th1} versus \eqref{it:th2} may also be completed using existing results on iterated function systems. Classical results on self-affine tilings such as \cite{Ba91} also provide a sufficient condition for the open set condition which can be shown to hold densely on  $(\bigcup_{m \geq 0}A^{-m}\Z^d)^N$ and this establishes \eqref{it:th1}. To show that \eqref{it:th2} is satisfied densely in $(\bigcup_{m \geq 0}A^{-m}\Z^d)^N$ we modify a classical Fourier-analytic argument of Erd\H{o}s from \cite{Er39} to show that for a dense set of translation parameters the measure $\nu$ is  singular.

We lastly remark that the literature on self-affine tilings includes general sufficient conditions for the attractor to have zero Lebesgue measure (and hence for the case \eqref{it:th2} to hold) which could in principle be used to give an alternative, more constructive proof that \eqref{it:th2} holds densely in certain cases, but these results require additional hypotheses on $A$ and $N$ which would significantly narrow the scope of the result. Such sufficient conditions are part of the subject of the article \cite{LaWa96b}.

\section{Proof of Theorem \ref{th:main}}

Throughout the proof we fix $A$ and $N$ satisfying the hypotheses of Theorem \ref{th:main}. We begin by establishing the dichotomy described in Theorem \ref{th:main}:
\begin{proposition}\label{pr:key}
Let $u_1,\ldots,u_{N} \in \bigcup_{m \geq 0} A^{-m}\Z^d$ and define an iterated function system $(T_1,\ldots,T_N)$ on $\R^d$ by $T_jx:=A^{-1}x+u_j$. Then there exists a unique Borel probability measure on $\R^d$ such that $\frac{1}{N}\sum_{j=1}^N (T_j)_*\nu=\nu$, and exactly one of the following holds:
  \begin{enumerate}[(i)]
  \item\label{it:pr1}
  The iterated function system $(T_1,\ldots,T_N)$ satisfies the open set condition, its attractor has positive Lebesgue measure and is equal to the closure of its interior, and the measure $\nu$ is given by the normalised restriction of Lebesgue measure to the attractor.
  \item\label{it:pr2}
  The  iterated function system $(T_1,\ldots,T_N)$ has exact overlaps, the upper box dimension of its attractor is strictly smaller than $d$, and the measure $\nu$ is singular.
  \end{enumerate} 
  \end{proposition}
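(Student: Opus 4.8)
The plan is to reduce to the integer case and then argue the dichotomy using known results from the tiling and IFS literature. First, observe that if every $u_j$ lies in $\bigcup_{m\geq 0}A^{-m}\Z^d$, then for sufficiently large $m$ all the vectors $A^m u_j$ lie in $\Z^d$. Conjugating the iterated function system by the linear map $x\mapsto A^m x$ replaces each $T_j$ by $x\mapsto A^{-1}x + A^m u_j$, an affine iterated function system of the same form but now with integer translation parameters; this conjugation preserves the open set condition, the existence of exact overlaps, the box dimension of the attractor, and the singularity or absolute continuity (with respect to Lebesgue measure) of the self-affine measure $\nu$, since these are all invariant under invertible affine changes of co-ordinates. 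Subtracting $A^m u_1$ from every translation (another affine conjugacy, this time a translation) I may further assume $u_1 = 0$. So it suffices to prove the proposition under the assumption $u_1,\ldots,u_N \in \Z^d$ with $u_1 = 0$.

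Next I would invoke the dichotomy that is classical for such integer systems. Since $A$ has integer entries and all eigenvalues of modulus $>1$, and $N = |\det A|$, the map $A$ together with the digit set $\{u_1,\ldots,u_N\}\subset\Z^d$ is exactly the setup of self-affine tiles and the results of Lagarias--Wang (and Bandt): the attractor $X$ satisfies $AX = \bigcup_{j=1}^N (X + A u_j)$, so $X$ is (an affine image of) a self-affine tile candidate with $N$ digits. In this setting it is known (\cite{LaWa96a}) that exactly one of two things happens: either the digit set is a complete residue system modulo $A\Z^d$ in the measure-theoretic sense needed for tiling, the Lebesgue measure of $X$ is positive, and $X$ tiles $\R^d$ by a lattice — equivalently the open set condition holds with the interior of $X$ as the open set and $X = \overline{\Int X}$; or else the digits are not in sufficiently general position, two distinct finite words give the same composition (an exact overlap), and $\mathrm{Leb}(X) = 0$, forcing the upper box dimension of $X$ to be strictly less than $d$. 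The equivalence of positive Lebesgue measure with the absence of exact overlaps in this integer/self-affine-tile regime is precisely the content of \cite{LaWa96a}, which I would cite for this step.

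It then remains to identify $\nu$ in each case and to pin down the box-dimension and singularity statements. In case \eqref{it:pr1}, when the open set condition holds and $\mathrm{Leb}(X)>0$, the normalised restriction of Lebesgue measure to $X$ satisfies the self-similarity identity $\frac{1}{N}\sum_j (T_j)_*\mu = \mu$ because the images $T_j X$ tile $X$ up to measure zero and each $T_j$ scales Lebesgue measure by $|\det A|^{-1} = N^{-1}$; by the uniqueness of the stationary measure this forces $\nu$ to equal that normalised restriction, which is visibly absolutely continuous. In case \eqref{it:pr2}, the existence of an exact overlap means the self-affine measure $\nu$ is supported on a set which is the attractor of a smaller iterated function system obtained by deleting a redundant word, so $\nu$ has an atom-free but dimension-deficient support; more directly, $\mathrm{Leb}(X)=0$ and $\nu$ is supported on $X$, hence $\nu$ is singular, and the strict inequality for the upper box dimension follows from $\mathrm{Leb}(X)=0$ together with the standard fact that a self-affine attractor with the ``integer'' structure here and with zero Lebesgue measure cannot have full box dimension (this is where one uses that the covering numbers at scale $\|A\|^{-n}$ grow strictly slower than $N^n$, again available from the tiling literature, e.g. \cite{LaWa96b}). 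Finally, that the two cases are mutually exclusive is immediate since one has $\mathrm{Leb}(X)>0$ and the other $\mathrm{Leb}(X)=0$; that they are exhaustive is the dichotomy of the previous paragraph.

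The main obstacle is bookkeeping rather than depth: one must be careful that the affine conjugacy in the first step genuinely transports every property in the statement (in particular that the open set condition and the box dimension are conjugacy-invariant, which is routine, and that absolute continuity versus singularity of $\nu$ is preserved, which follows because an invertible affine map sends Lebesgue-null sets to Lebesgue-null sets). The one genuinely substantive input that must be imported wholesale is the Lagarias--Wang equivalence between positive Lebesgue measure of the attractor and the absence of exact overlaps for integer self-affine systems with $N = |\det A|$ digits; everything else is assembled from the uniqueness of the stationary measure and elementary measure theory.
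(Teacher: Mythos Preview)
Your overall strategy matches the paper's: reduce by affine conjugacy to integer translations with $u_1=0$, then split on whether exact overlaps exist, invoking Lagarias--Wang for the ``no overlaps $\Rightarrow$ positive measure, tile, OSC'' direction and identifying $\nu$ with normalised Lebesgue measure there. The one substantive divergence is in case \eqref{it:pr2}. You correctly note that deleting a redundant word gives a smaller IFS, but you then abandon that thread and instead try to deduce $\overline{\dim}_B X<d$ from $\mathrm{Leb}(X)=0$ via a ``standard fact'' about covering numbers. That implication is not automatic: zero Lebesgue measure does not by itself force box dimension below $d$, and for a non-conformal $A$ the naive covering-number count at scale $\|A^{-n}\|$ does not line up with $N^n$ in the way needed. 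The paper instead completes the deleted-word idea: with an overlap at level $n$ the attractor is also the attractor of an IFS of $N^n-1$ maps each with linear part $A^{-n}$, and since $\sum_{j=1}^{N^n-1}|\det A^{-n}|=1-N^{-n}<1$ the \emph{affinity dimension} of that system is strictly below $d$, which bounds the upper box dimension. This is the clean way to get the strict inequality; your covering-number sketch would need essentially this argument to be made rigorous. (A minor point: the digits need not form a complete residue system modulo $A\Z^d$ in case \eqref{it:pr1}; that is Bandt's sufficient criterion, whereas the Lagarias--Wang input actually used is the equivalence between uniform discreteness of the expansion set $\bigcup_n\{\sum_{t<n}A^tu_{j_t}\}$ and positivity of the Lebesgue measure, and uniform discreteness is what the no-overlap hypothesis buys you.)
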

  \begin{proof}
  The existence and uniqueness of $\nu$ follow from the classical results of J.E. Hutchinson \cite{Hu81}. Since there exists $m\geq 0$ such that $u_1,\ldots,u_N \in A^{-m}\Z^d$, by a linear change of co-ordinates of the form $x \mapsto A^mx$ we may without loss of generality suppose that $u_1,\ldots,u_N \in \Z^d$; by a further translation of the co-ordinates we also assume without loss of generality that $u_1=0$. It is straightforward to check that the two outcomes \eqref{it:pr1} and \eqref{it:pr2} are unaffected by these co-ordinate transformations. We therefore make this reduction for the remainder of the proof.
  
We first show that if exact overlaps are present then the other properties described in \eqref{it:pr2} above must follow. We recall that the \emph{affinity dimension} defined by Falconer in \cite{Fa88} is a real number associated to each invertible affine iterated function system $(\hat{T}_1,\ldots,\hat{T}_{\hat{N}})$ on $\R^d$ whose properties include the following. Let each $\hat{T}_j$ have the form $\hat{T}_jx=B_jx+v_j$ for some invertible linear map $B_j$ and vector $v_j$. Then the upper box dimension of the attractor of   $(\hat{T}_1,\ldots,\hat{T}_{\hat{N}})$  is bounded above by the affinity dimension of $(\hat{T}_1,\ldots,\hat{T}_{\hat{N}})$, and its affinity dimension is strictly less than $d$ if and only if $\sum_{j=1}^{\hat{N}} |\det B_j|<1$. Suppose now that $(T_1,\ldots,T_N)$ admits an exact overlap corresponding to two words of length $n$, say, and let $X=\bigcup_{j=1}^N T_jX$ be its attractor. Let $\hat{T}_1,\ldots,\hat{T}_{N^n}$ denote the $N^n$ maps of the form $T_{j_1}\cdots T_{j_n}$ where $j_1,\ldots,j_n \in \{1,\ldots,N\}$. Since $(T_1,\ldots,T_N)$ is assumed to have an exact overlap of length $n$, at least two of the maps $\hat{T}_j$ fail to be distinct, so by relabelling if necessary we assume without loss of generality that $\hat{T}_{N^n}=\hat{T}_{N^n-1}$. By iterating the functional equation $X=\bigcup_{j=1}^N T_jX$ we have
\[X=\bigcup_{j=1}^N T_jX= \bigcup_{j_1,\ldots,j_n=1}^N T_{j_1}\cdots T_{j_n}X = \bigcup_{j=1}^{N^n} \hat{T}_jX=\bigcup_{n=1}^{N^n-1}\hat{T}_jX\]
where we have used the fact that the sets $\hat{T}_{N^n}X$ and $\hat{T}_{N^n-1}X$ are identical. Thus $X$ is the attractor of the iterated function system $(\hat{T}_1,\ldots,\hat{T}_{N^n-1})$ as well as of $(T_1,\ldots,T_N)$. But each $\hat{T}_j$ has the form $\hat{T}_jx=A^{-n}x + v_j$ for some $v_j \in \R^d$, and since $\sum_{j=1}^{N^n-1}|\det A^{-n}|=(N^n-1)|\det A|^{-n} = 1-N^{-n}<1$ the affinity dimension of  $(\hat{T}_1,\ldots,\hat{T}_{N^n-1})$ is strictly less than $d$. It follows that the upper box dimension of $X$ is strictly smaller than $d$ and in particular $X$ has zero Lebesgue measure. The support $Z$ of $\nu$ is easily seen to satisfy $Z=\bigcup_{j=1}^N T_jZ$ and is therefore precisely equal to the attractor $X$, so the support of $\nu$ has zero Lebesgue measure and $\nu$ is necessarily singular. This completes the verification of \eqref{it:pr2}.

We now suppose for the remainder of the proof that exact overlaps are not present and demonstrate that in this case \eqref{it:pr1} holds. For this we will apply a result of Lagarias and Wang from the literature on self-affine tilings (\cite{LaWa96a}). A simple induction shows that for every $n \geq 1$, for every word $ j_1j_2\cdots j_n$ and every $x \in \R^d$ we have
  \[T_{j_1}\cdots T_{j_n} x =A^{-n}x + \sum_{r=1}^n A^{1-r} u_{j_r}.\]
 It follows that if $j_1j_2\cdots j_n$ and $k_1k_2\cdots k_n$ are distinct words, then since by hypothesis $T_{j_1}\cdots T_{j_n} \neq T_{k_1}\cdots T_{k_n}$ we necessarily have
 \[ \sum_{r=1}^n A^{1-r} u_{j_r} \neq  \sum_{r=1}^n A^{1-r} u_{k_r}.\]
Multiplying on the left by $A^{n-1}$ and rearranging we find that
 \[\sum_{t=0}^{n-1} A^t u_{j_{n-t}} \neq \sum_{t=0}^{n-1}A^t u_{k_{n-t}}.\]
 Since both expressions belong to $\Z^d$ they are necessarily separated by a Euclidean distance of at least $1$. For every $n \geq 1$ define  \[\mathcal{D}_n:=\left\{\sum_{t=0}^{n-1} A^t u_{j_{t+1}} \colon j_1,j_2,\ldots,j_n \in \{1,\ldots,N\} \right\}.\]
 It follows from the preceding arguments that each $\mathcal{D}_n$ contains exactly $N^n$ distinct points. The fact that $u_1=0$ implies that $\mathcal{D}_n \subseteq \mathcal{D}_{n+1}$ for every $n \geq 1$ since every element of $\mathcal{D}_n$ may be realised as an element of $\mathcal{D}_{n+1}$ with $u_{j_{n+1}}=u_1=0$. The union $\bigcup_{n=1}^\infty \mathcal{D}_n$ consists of points separated by a Euclidean distance of at least $1$ and is thus uniformly discrete in the sense of \cite[\S2]{LaWa96a}. We may now apply the implication (iv)$\implies$(iii) of \cite[Theorem 1.1]{LaWa96a} to deduce that the attractor of $(T_1,\ldots,T_N)$ is equal to the closure of its interior and that the topological boundary of the attractor has zero Lebesgue measure. Let $U$ denote the interior of the attractor: clearly $U$ has positive Lebesgue measure and $U=\bigcup_{j=1}^N T_j U$ up to Lebesgue measure zero. The latter is only possible if both $T_jU \subseteq U$ for every $j=1,\ldots,N$ and if distinct sets $T_jU$ do not intersect, which is precisely the open set condition. It is straightforward to deduce that if $\lambda$ denotes the normalised restriction of Lebesgue measure to $U$ then $\frac{1}{N}\sum_{j=1}^N (T_j)_*\lambda=\lambda$ and this implies that $\lambda=\nu$ since this functional equation has a unique solution. The proof is complete.\end{proof}
 
 To prove Theorem \ref{th:main} it remains to establish that the two outcomes \eqref{it:pr1} and \eqref{it:pr2} each hold for a dense set of parameters $(u_1,\ldots,u_N) \in (\bigcup_{m\geq 0}A^{-m}\Z^d)^N$. In view of the above proposition it will suffice to establish the following two facts: there exists a dense set of $(u_1,\ldots,u_N) \in (\bigcup_{m\geq 0}A^{-m}\Z^d)^N$ such that the associated iterated function system does not have exact overlaps; and there exists a dense set of $(u_1,\ldots,u_N) \in (\bigcup_{m\geq 0}A^{-m}\Z^d)^N$ such that the associated measure $\nu$ is singular. We begin with the former, for which we modify a criterion established by C. Bandt \cite{Ba91}.
\begin{lemma}
Let $\mathcal{U}$ denote the set of all $(u_1,\ldots,u_N) \in (\bigcup_{m\geq 0} A^{-m}\Z^d)^N$ with the property that for some integer $m_0 \in \Z$  we have $A^{m_0}u_j \in \Z^d$ for every $j=1,\ldots,N$ and the $N$ sets $A^{m_0}u_j+A\Z^d$ are all pairwise distinct. If $(u_1,\ldots,u_N) \in \mathcal{U}$, then the associated iterated function system $(T_1,\ldots,T_N)$ defined by $T_jx:=A^{-1}x+u_j$ does not have exact overlaps.
\end{lemma}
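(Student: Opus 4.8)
The plan is to reduce to the case in which all the $u_j$ lie in $\Z^d$ and then to recognise the absence of exact overlaps as the uniqueness of ``base-$A$ expansions'' with the prescribed digit set. Suppose $(u_1,\ldots,u_N)\in\mathcal U$, with $m_0\in\Z$ as in the definition of $\mathcal U$, and conjugate the iterated function system by the invertible linear map $x\mapsto A^{m_0}x$. A direct computation gives $A^{m_0}\circ T_j\circ A^{-m_0}(y)=A^{-1}y+A^{m_0}u_j$, so this conjugation replaces each translation vector $u_j$ by $A^{m_0}u_j\in\Z^d$ while manifestly not affecting whether $(T_1,\ldots,T_N)$ has exact overlaps. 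Hence I may assume that every $u_j$ lies in $\Z^d$ and that the cosets $u_1+A\Z^d,\ldots,u_N+A\Z^d$ are pairwise distinct in $\Z^d/A\Z^d$; since $|\Z^d/A\Z^d|=|\det A|=N$ this means $\{u_1,\ldots,u_N\}$ is a complete set of coset representatives for $\Z^d/A\Z^d$, and in particular the $u_j$ are pairwise distinct points of $\Z^d$.

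Next I would invoke the elementary identity already recorded in the proof of Proposition~\ref{pr:key}, according to which an exact overlap between two words $j_1\cdots j_n$ and $k_1\cdots k_n$ of equal length $n$ (to which case every exact overlap may be reduced, as noted in the introduction) is equivalent to
\[\sum_{i=0}^{n-1} A^i u_{j_{n-i}} = \sum_{i=0}^{n-1} A^i u_{k_{n-i}}.\]
So it is enough to prove the following digit-uniqueness claim: for every $n\geq1$, whenever $b_0,\ldots,b_{n-1}$ and $b_0',\ldots,b_{n-1}'$ lie in $\{u_1,\ldots,u_N\}$ and satisfy $\sum_{i=0}^{n-1}A^i b_i=\sum_{i=0}^{n-1}A^i b_i'$, one has $b_i=b_i'$ for all $i$. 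This I would establish by induction on $n$, the case $n=1$ being trivial. For the inductive step, reducing the identity modulo $A\Z^d$ annihilates every term with $i\geq1$ and leaves $b_0\equiv b_0'\pmod{A\Z^d}$; since the digits are pairwise distinct modulo $A\Z^d$, this forces $b_0=b_0'$. Cancelling $b_0$ from both sides and applying the invertible map $A^{-1}$ gives $\sum_{j=0}^{n-2}A^j b_{j+1}=\sum_{j=0}^{n-2}A^j b'_{j+1}$, and the inductive hypothesis completes the step.

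Granted the claim, an exact overlap between $j_1\cdots j_n$ and $k_1\cdots k_n$ would force $u_{j_r}=u_{k_r}$ for every $r$, hence $T_{j_r}=T_{k_r}$ for every $r$; and because the $u_j$ are distinct it would further force $j_r=k_r$ for every $r$, contradicting the requirement in the definition of an exact overlap that the two words be distinct. Therefore $(T_1,\ldots,T_N)$ has no exact overlaps. I do not expect a genuine obstacle here: the content is the classical uniqueness of radix representations with a complete residue digit set, transplanted to the matrix setting, and is in essence the ``modular'' non-overlapping criterion of Bandt \cite{Ba91}. The only places demanding care are the index bookkeeping in passing from the overlap identity $\sum_{r=1}^n A^{1-r}u_{j_r}=\sum_{r=1}^n A^{1-r}u_{k_r}$ to the displayed form above (multiply on the left by $A^{n-1}$ and substitute $i=n-r$), and the routine but worth-stating observation that conjugation by $x\mapsto A^{m_0}x$ preserves the existence of exact overlaps.
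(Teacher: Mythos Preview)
Your proof is correct and follows essentially the same approach as the paper's. Both arguments reduce the overlap identity to the equality of two ``base-$A$'' expansions with digits in a set of residues that are pairwise distinct modulo $A\Z^d$, and then read off the lowest-order digit modulo $A\Z^d$; you organise this as an induction on the word length, while the paper argues by contradiction, letting $t$ be the largest index with $u_{j_t}\neq u_{k_t}$ and isolating the $r=0$ term after reversal. The only cosmetic difference is that you first conjugate by $x\mapsto A^{m_0}x$ to land in $\Z^d$, whereas the paper carries the factor $A^{m_0}$ through the computation; either way the key step is the same reduction modulo $A\Z^d$.
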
 
 \begin{proof}
Let $(u_1,\ldots,u_N) \in  (\bigcup_{m\geq 0} A^{-m}\Z^d)^N$ and suppose that $A^{m_0}u_j \in \Z^d$ for every $j=1,\ldots,N$ and that the $N$ sets $A^{m_0}u_j+A\Z^d$ are all pairwise distinct.  Suppose for a contradiction that $T_{j_1}\cdots T_{j_n}=T_{k_1}\cdots T_{k_n}$ for two distinct words $j_1j_2\cdots j_n$ and $k_1k_2\cdots k_n$, which without loss of generality we assume to have equal length.  Since for all $x \in \R^d$
\[T_{j_1}\cdots T_{j_n}x = A^{-n}x + \sum_{r=1}^n A^{1-r}u_{j_r},\qquad T_{k_1}\cdots T_{k_n}x = A^{-n}x + \sum_{r=1}^n A^{1-r}u_{k_r}\]
it follows that
\[ \sum_{r=1}^n A^{1-r}u_{j_r} = \sum_{r=1}^n A^{1-r}u_{k_r}.\]
Let $t \in \{1,\ldots,n\}$ be the largest integer such that $u_{j_t} \neq u_{k_t}$. Clearly this implies
\[ \sum_{r=1}^t A^{1-r}u_{j_r} = \sum_{r=1}^t A^{1-r}u_{k_r}\]
and therefore
\[ \sum_{r=0}^{t-1} A^r u_{j_{t-r}} = \sum_{r=0}^{t-1} A^r u_{k_{t-r}}\]
so that
\[A^{m_0}u_{j_t} - A^{m_0} u_{k_t} = \sum_{r=1}^{t-1} A^{r} \left(A^{m_0}u_{k_r}-A^{m_0}u_{j_r}\right) \in A\Z^d.\]
Hence $A^{m_0}u_{j_t}+A\Z^d =  A^{m_0} u_{k_t}+A\Z^d$, a contradiction. The proof is complete.
 \end{proof}
 
 \begin{lemma}
 The set $\mathcal{U}$ is dense in $(\bigcup_{m\geq 0} A^{-m}\Z^d)^N$.
 \end{lemma}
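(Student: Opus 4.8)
The plan is to approximate an arbitrary tuple $(u_1,\ldots,u_N) \in (\bigcup_{m\geq 0}A^{-m}\Z^d)^N$ by perturbing each $u_j$ inside a sufficiently fine lattice $A^{-M}\Z^d$ so that the rescaled vectors $A^{M}u_j$ are forced into prescribed, pairwise distinct cosets of $A\Z^d$ in $\Z^d$. The key arithmetic input is that, since $A$ has integer entries and $|\det A|=N$, the index $[\Z^d:A\Z^d]$ equals exactly $N$ (by Smith normal form); I would fix once and for all a complete set of coset representatives $r_1,\ldots,r_N \in \Z^d$ and put $R:=\max_{1\leq j\leq N}\|r_j\|$. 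Since there are only $N$ cosets in total, any $N$ vectors lying in pairwise distinct cosets automatically form a complete residue system, which is exactly the condition appearing in the definition of $\mathcal{U}$.

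Given $(u_1,\ldots,u_N)$ and $\epsilon>0$, first I would choose an integer $M\geq 0$ large enough that $u_j \in A^{-M}\Z^d$ for every $j$ — this is possible because $A\Z^d\subseteq \Z^d$ implies $A^{-m}\Z^d = A^{-(m+1)}(A\Z^d)\subseteq A^{-(m+1)}\Z^d$, so the union $\bigcup_{m\geq 0}A^{-m}\Z^d$ is nested increasing and a single large $M$ captures all the $u_j$ — and simultaneously large enough that $2R\,\|A^{-M}\|<\epsilon$. The latter can be arranged because every eigenvalue of $A$ has modulus strictly larger than $1$, so the spectral radius of $A^{-1}$ is strictly less than $1$ and hence $\|A^{-M}\|\to 0$ as $M\to\infty$. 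Writing $z_j:=A^{M}u_j\in\Z^d$, I would let $\sigma(j)$ be the index for which $z_j \equiv r_{\sigma(j)} \pmod{A\Z^d}$ and set $w_j:=z_j-r_{\sigma(j)}+r_j\in\Z^d$. Then $w_j \equiv r_j \pmod{A\Z^d}$, so the cosets $w_j+A\Z^d$ are pairwise distinct, while $\|w_j-z_j\|=\|r_j-r_{\sigma(j)}\|\leq 2R$.

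Finally I would take $u_j':=A^{-M}w_j$ for each $j$. Each $u_j'$ lies in $A^{-M}\Z^d\subseteq \bigcup_{m\geq 0}A^{-m}\Z^d$, and $\|u_j'-u_j\| = \|A^{-M}(w_j-z_j)\| \leq \|A^{-M}\|\cdot 2R < \epsilon$, so $(u_1',\ldots,u_N')$ lies within $\epsilon$ of $(u_1,\ldots,u_N)$ in each coordinate. Moreover, taking $m_0=M$ in the definition of $\mathcal{U}$, we have $A^{m_0}u_j'=w_j\in\Z^d$ for every $j$ and the sets $A^{m_0}u_j'+A\Z^d = w_j+A\Z^d$ are pairwise distinct, so $(u_1',\ldots,u_N')\in\mathcal{U}$. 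Since $\epsilon>0$ is arbitrary, this shows $\mathcal{U}$ is dense in $(\bigcup_{m\geq 0}A^{-m}\Z^d)^N$.

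As for the main obstacle: there is essentially no hard step here, and the argument is almost entirely bookkeeping. The only points that require a moment's care are recording that $\bigcup_{m\geq 0}A^{-m}\Z^d$ is a nested union so that one $M$ suffices for all $u_j$; that $[\Z^d:A\Z^d]=N$, which makes ``pairwise distinct cosets'' equivalent to ``a complete residue system'' and guarantees the construction of $w_j$ is possible; and that the perturbation is genuinely small, which is where the contractivity $\|A^{-M}\|\to 0$ is used to absorb the bounded-norm corrections $r_j-r_{\sigma(j)}$.
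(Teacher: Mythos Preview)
Your proof is correct and follows essentially the same approach as the paper: work at a fine scale $A^{-M}\Z^d$, perturb each rescaled vector by a bounded amount at the integer level so that the resulting vectors lie in the $N$ distinct cosets of $A\Z^d$ in $\Z^d$, and then pull back by $A^{-M}$ using $\|A^{-M}\|\to 0$ to make the perturbation small. The only cosmetic difference is that you carry out the perturbation via explicit coset representatives $r_1,\ldots,r_N$ (invoking $[\Z^d:A\Z^d]=|\det A|=N$), whereas the paper phrases it as choosing each $w_j$ in a prescribed coset using that every coset of $A\Z^d$ is $(\|A\|/\sqrt{2})$-dense in $\R^d$.
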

  \begin{proof}
  Let $(v_1,\ldots,v_N) \in (\bigcup_{m\geq 0} A^{-m}\Z^d)^N$ be arbitrary and choose $m_0 \geq 0$ such that $A^{m_0}v_j \in \Z^d$ for every $j=1,\ldots,N$. Let $n \geq 1$ be arbitrary. Since $\Z^d$ is $(1/\sqrt{2})$-dense in $\R^d$, the lattice $A\Z^d$ is $(\|A\|/\sqrt{2})$-dense in $\R^d$, so the $N$ distinct sets of the form $v+A\Z^d$ are each $(\|A\|/\sqrt{2})$-dense in $\R^d$. It follows that we may choose $(w_1,\ldots,w_N) \in \Z^d$ such that $\max_j \|A^{m_0+n}v_j-w_j\| \leq \|A\|/\sqrt{2}$ and such that the sets $w_j+A\Z^d$ are all distinct. Now define $u_j:=A^{-m_0-n}w_j$ for every $j=1,\ldots,N$. Clearly we have $(u_1,\ldots,u_N) \in \mathcal{U}$ and
  \[\max_j \|u_j-v_j\|\leq \left\|A^{-m_0-n}\right\| \cdot \max_j \left\|A^{m_0+n}v_j-w_j\right\|  \leq \left(\frac{\|A\|}{\sqrt{2}}\right) \left\|A^{-m_0-n}\right\|.\]
  In particular by choosing $n$ sufficiently large the tuple $(u_1,\ldots,u_N)$ may be chosen as close to $(v_1,\ldots,v_N)$ as desired. The lemma is proved.
  \end{proof}
  This completes the proof that the set of parameters corresponding to the outcome \eqref{it:th1} of Theorem \ref{th:main} is dense in $(\bigcup_{m \geq 0} A^{-m}\Z^d)^N$. To show that the second outcome   \eqref{it:th2} is also dense we require two further lemmas. The first of these shows that a certain Fourier-analytic condition holds densely in $(\bigcup_{m \geq 0} A^{-m}\Z^d)^N$, and the second demonstrates that the Fourier-analytic condition implies the singularity of the measure $\nu$.
\begin{lemma}
Let $w \in \Z^d$ be nonzero and let $\mathcal{V}_w$ denote the set of all $(u_1,\ldots,u_N) \in (\bigcup_{m \geq 0}A^{-m}\Z^d)^N$ such that for every $n \in \Z$ we have $\sum_{j=1}^N e^{2\pi i \langle A^n u_j,w\rangle}\neq 0$. Then $\mathcal{V}_w$ is dense in $(\bigcup_{m \geq 0}A^{-m}\Z^d)^N$.
  \end{lemma}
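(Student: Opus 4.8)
The plan is to establish density directly: given an arbitrary $(v_1,\dots,v_N) \in (\bigcup_{m\geq0}A^{-m}\Z^d)^N$ and $\varepsilon>0$, I will produce a tuple of $\mathcal V_w$ within $\varepsilon$ of it by perturbing \emph{only the last coordinate}, keeping $u_1=v_1,\dots,u_{N-1}=v_{N-1}$ fixed. Writing $p_n:=\frac1N\sum_{j=1}^N e^{2\pi i\langle A^n u_j,w\rangle}$ and using $\langle A^n u_j,w\rangle=\langle u_j,(A^{\top})^n w\rangle$, the requirement $(u_1,\dots,u_N)\in\mathcal V_w$ is exactly that $p_n\neq0$ for every $n\in\Z$. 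The first and principal step is to reduce this to a \emph{finite} set of values of $n$.

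For large negative $n$ this reduction is routine: since every eigenvalue of $A$ has modulus greater than $1$ we have $\|A^n\|\to0$ as $n\to-\infty$, so $|p_n-1|\leq 2\pi\|w\|\,\|A^n\|\,\max_j\|u_j\|<1$ as soon as $n\leq -L$, where $L$ depends only on $A$, $w$ and an upper bound $R$ for $\max_j\|u_j\|$; hence $p_n\neq0$ there. For large positive $n$ the crucial input is the hypothesis $N>2$ combined with having perturbed only the last coordinate: fixing $M$ with $v_1,\dots,v_{N-1}\in A^{-M}\Z^d$, for every $n\geq M$ each inner product $\langle A^n v_j,w\rangle$ with $j<N$ is an integer, so $Np_n=(N-1)+e^{2\pi i\langle A^n u_N,w\rangle}$ has modulus at least $N-2>0$, whatever $u_N$ is. Thus only the levels $n\in\{-L,\dots,M-1\}$ remain, a finite set determined before $u_N$ is chosen.

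For such $n$, with $u_1,\dots,u_{N-1}$ frozen we have $Np_n=S_n+e^{2\pi i\langle u_N,(A^{\top})^n w\rangle}$ where $S_n:=\sum_{j=1}^{N-1}e^{2\pi i\langle A^n v_j,w\rangle}$ is fixed, so $p_n=0$ requires $|S_n|=1$ and then forces $u_N$ into the set $H_n:=\{x\in\R^d:\langle x,(A^{\top})^n w\rangle\in\theta_n+\Z\}$ where $e^{2\pi i\theta_n}=-S_n$. Since $A$ is invertible and $w\neq0$, the covector $(A^{\top})^n w$ is nonzero, so each $H_n$ is a closed, nowhere dense union of parallel affine hyperplanes, and the union of the $H_n$ over those finitely many $n$ with $|S_n|=1$ is again nowhere dense; its complement is open and dense and therefore contains a nonempty open ball inside $B(v_N,\min(\varepsilon,1))$. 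Because the lattice $A^{-m}\Z^d$ has covering radius at most $\frac{\sqrt d}{2}\|A^{-m}\|\to0$, the set $\bigcup_{m\geq0}A^{-m}\Z^d$ is dense in $\R^d$ and meets that ball; choosing $u_N$ in the intersection gives $\|u_N-v_N\|<\varepsilon$, $u_N\in\bigcup_{m\geq0}A^{-m}\Z^d$, $\|u_N\|\leq\max_j\|v_j\|+1=:R$ (so the estimate above applies), and $u_N\notin H_n$ for every relevant $n$. Combining the three cases yields $p_n\neq0$ for all $n\in\Z$, i.e.\ $(v_1,\dots,v_{N-1},u_N)\in\mathcal V_w$, as close to $(v_1,\dots,v_N)$ as desired.

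The step I expect to be the genuine obstacle is the finiteness reduction, and in particular its upper cutoff. The lower cutoff is mere contractivity of $A^{-1}$, but the upper one really uses both $N>2$ (via $|(N-1)+e^{i\theta}|\geq N-2$) and the decision to freeze exactly the $N-1$ coordinates sharing a common $A^{-M}$-denominator; without such an observation one would have to control $p_n$ for infinitely many $n$ simultaneously, and the soft perturbation argument would break down because a small change in the translation vectors moves each individual $p_n$ only slightly and so cannot displace a $p_n$ that happens to vanish. Once finiteness is in place the remainder is elementary point-set topology: a finite union of nowhere-dense sets of the form $\{x:\langle x,b\rangle\in\theta+\Z\}$ is nowhere dense, whereas $\bigcup_{m\geq0}A^{-m}\Z^d$ is dense.
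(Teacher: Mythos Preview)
Your argument is correct and follows essentially the same route as the paper's proof: fix the first $N-1$ coordinates, reduce the condition on the last coordinate to finitely many values of $n$ (using $|S_n|=N-1>1$ for large positive $n$ and $|S_n|\to N-1$ for large negative $n$, both relying on $N>2$), and then avoid a finite union of hyperplane families using density of $\bigcup_{m\geq0}A^{-m}\Z^d$. The only cosmetic difference is that for the lower cutoff you bound $|p_n-1|$ (which involves $u_N$ and hence requires the a priori bound $\|u_N\|\leq R$), whereas the paper bounds $|S_n|$ directly (involving only $u_1,\dots,u_{N-1}$, so no constraint on $u_N$ is needed); both work.
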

  \begin{proof}
  Fix a nonzero vector $w \in \Z^d$ throughout the proof, and also choose vectors $u_1,\ldots,u_{N-1} \in \bigcup_{m \geq 0}A^{-m}\Z^d$ arbitrarily. We will show that the set of all $u_N \in \bigcup_{m \geq 0}A^{-m}\Z^d$ satisfying $(u_1,\ldots,u_N) \in \mathcal{V}_w$ is a dense set, and this clearly suffices to prove the lemma.
  
  Since $\bigcup_{m \geq 0}A^{-m}\Z^d$ is dense in $\R^d$ it is  sufficient for us to show that the set
  \[\mathcal{G}:=\left\{v \in \R^d \colon e^{2\pi i \langle A^nv,w\rangle} + \sum_{j=1}^{N-1} e^{2\pi i \langle A^nu_j,w\rangle } \neq 0 \text{ for all }n \in \Z\right\}\]
  is open and dense in $\R^d$.   For every $n \in \Z$ define 
  \[\mathcal{G}_n:=\left\{v \in \R^d \colon e^{2\pi i \langle A^nv,w\rangle} + \sum_{j=1}^{N-1} e^{2\pi i \langle A^nu_j,w\rangle}\neq 0\right\}\]
 so that $\mathcal{G}=\bigcap_{n \in \Z} \mathcal{G}_n$. If $|\sum_{j=1}^{N-1} e^{2\pi i \langle A^nu_j,w\rangle}|\neq 1$ then $\mathcal{G}_n$ is equal to $\R^d$; otherwise, the complement of $\mathcal{G}_n$ in $\R^d$ is the union of countably infinitely many evenly-spaced, parallel hyperplanes in $\R^d$, each being a translated copy of the orthogonal complement of $(A^\transp)^nw$. In either case $\mathcal{G}_n$ is easily seen to be open and dense in $\R^d$. Moreover, if the second case occurs only for finitely many $n \in \Z$ then $\mathcal{G}$ is equal to the intersection of finitely many open and dense subsets of $\R^d$ and hence will be open and dense. But it is indeed the case that $|\sum_{j=1}^{N-1} e^{2\pi i \langle A^nu_j,w\rangle}|= 1$ for at most finitely many $n$. To see this we observe that on the one hand, for all sufficiently large positive $n$ we have $A^nu_j \in \Z^d$ for every $j=1,\ldots,N-1$. For all such $n$ we clearly have  $e^{2\pi i \langle A^nu_j,w\rangle}=1$ for every $j=1,\ldots,N-1$ so that $|\sum_{j=1}^{N-1} e^{2\pi i \langle A^nu_j,w\rangle}|=N-1>1$. On the other hand since $\lim_{k\to-\infty} A^{-k}u_j=0$ for every $j=1,\ldots,N-1$ we also have  $\lim_{n \to -\infty} |\sum_{j=1}^{N-1} e^{2\pi i \langle A^nu_j,w\rangle}|=N-1>1$. We conclude that  $|\sum_{j=1}^{N-1} e^{2\pi i \langle A^nu_j,w\rangle}|$ can equal $1$ for at most finitely many $n$, so $\mathcal{G}$ is indeed equal to the intersection of a finite collection of open, dense subsets of $\R^d$ and is therefore open and dense as needed. The result follows.\end{proof}
 The following result now completes the proof of Theorem \ref{th:main}.
  \begin{lemma}
 Let $w \in \Z^d$ be nonzero and let $(u_1,\ldots,u_N) \in \mathcal{V}_w$. Let $(T_1,\ldots,T_N)$ denote the affine iterated function system associated to $(u_1,\ldots,u_N)$ and $\nu$ the unique Borel probability measure on $\R^d$ satisfying $\nu = \frac{1}{N} \sum_{j=1}^N (T_j)_*\nu$. Then $\nu$ is singular with respect to Lebesgue measure.
 \end{lemma}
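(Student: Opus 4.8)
The plan is to adapt Erd\H{o}s's argument from \cite{Er39} in Fourier-analytic form. First I would record that $\nu$ is the distribution of the almost surely convergent random series $\sum_{r \geq 0} A^{-r} u_{\jjj_{r+1}}$, where $\jjj_1,\jjj_2,\ldots$ are independent and uniformly distributed on $\{1,\ldots,N\}$; this follows by iterating the identity $T_{\jjj_1}\cdots T_{\jjj_n} x = A^{-n}x + \sum_{r=1}^{n} A^{1-r}u_{\jjj_r}$ from Proposition~\ref{pr:key} and letting $n \to \infty$. Writing $\widehat\nu(\xi) := \int_{\R^d} e^{2\pi i \langle x,\xi\rangle}\,d\nu(x)$ and $g(\xi) := \frac{1}{N}\sum_{j=1}^N e^{2\pi i \langle u_j,\xi\rangle}$, independence of the coordinates $\jjj_r$ yields the absolutely convergent product representation $\widehat\nu(\xi) = \prod_{r=0}^{\infty} g\big((A^\transp)^{-r}\xi\big)$ (absolutely convergent because $(A^\transp)^{-r}\xi \to 0$ with geometrically small defect $g((A^\transp)^{-r}\xi)-1$). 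I would then specialise to the sequence $\xi_k := (A^\transp)^k w$ for $k \geq 0$: since $(A^\transp)^{-r}(A^\transp)^k w = (A^\transp)^{k-r}w$, reindexing by $s = k-r$ gives $\widehat\nu(\xi_k) = \prod_{s=-\infty}^{k} g\big((A^\transp)^s w\big)$, and here $g\big((A^\transp)^s w\big) = \frac{1}{N}\sum_{j=1}^{N} e^{2\pi i \langle A^s u_j, w\rangle}$ is exactly the quantity appearing in the definition of $\mathcal{V}_w$.

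Next I would control the two tails of this product. Choosing $m_0 \geq 0$ with $A^{m_0}u_j \in \Z^d$ for all $j$, for every $s \geq m_0$ one has $A^s u_j \in \Z^d$, so $\langle A^s u_j, w \rangle \in \Z$ and therefore $g\big((A^\transp)^s w\big) = 1$; consequently for all $k \geq m_0$ the product stabilises, $\widehat\nu(\xi_k) = C$, where $C := \prod_{s=-\infty}^{m_0 - 1} g\big((A^\transp)^s w\big)$ is independent of $k$. For the other tail, the hypothesis that every eigenvalue of $A$ exceeds $1$ in modulus means $A^{-1}$ has spectral radius strictly less than $1$, so $\|A^s\| = \|(A^{-1})^{|s|}\|$ decays geometrically as $s \to -\infty$; a first-order estimate gives $\big|g\big((A^\transp)^s w\big) - 1\big| \leq 2\pi\|w\|\big(\max_j \|u_j\|\big)\|A^s\|$, which is summable over $s < m_0$. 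Hence the product defining $C$ converges absolutely, and since $(u_1,\ldots,u_N) \in \mathcal{V}_w$ guarantees that each factor $g\big((A^\transp)^s w\big)$ is nonzero, an absolutely convergent product with no zero factor is nonzero, so $C \neq 0$.

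To finish, I would observe that because every eigenvalue of $A^\transp$ exceeds $1$ in modulus we have $\|(A^\transp)^{-k}\| \to 0$, hence $\|\xi_k\| \geq \|w\|/\|(A^\transp)^{-k}\| \to \infty$; thus $\widehat\nu$ does not tend to $0$ along the sequence $\xi_k \to \infty$. If $\nu$ were absolutely continuous its density would lie in $L^1(\R^d)$ and the Riemann--Lebesgue lemma would force $\widehat\nu(\xi) \to 0$ as $\|\xi\| \to \infty$, a contradiction, so $\nu$ is not absolutely continuous. To upgrade this to singularity I would use a purity argument: writing $\nu = \nu_{\mathsf{a}} + \nu_{\mathsf{s}}$ for the Lebesgue decomposition and noting that each invertible affine map $T_j$ sends absolutely continuous measures to absolutely continuous measures and singular measures to singular measures, the measure $\frac{1}{N}\sum_j (T_j)_*\nu_{\mathsf{a}} + \frac{1}{N}\sum_j (T_j)_*\nu_{\mathsf{s}}$ exhibits $\frac{1}{N}\sum_j (T_j)_*\nu = \nu$ as an absolutely continuous part plus a singular part, so by uniqueness of the Lebesgue decomposition $\nu_{\mathsf{a}} = \frac{1}{N}\sum_j (T_j)_*\nu_{\mathsf{a}}$; were $\nu_{\mathsf{a}}$ nonzero, its normalisation would be a Borel probability measure satisfying the defining equation of $\nu$ and hence equal to $\nu$ by Hutchinson's uniqueness, making $\nu$ absolutely continuous, which is impossible. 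Therefore $\nu_{\mathsf{a}} = 0$ and $\nu$ is singular.

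The main obstacle is the middle step: showing not merely that the partial products are nonzero but that their limit $C$ is nonzero. This is precisely where the Diophantine content of the hypothesis enters --- one needs simultaneously that no factor vanishes (supplied by membership in $\mathcal{V}_w$) and that $\sum_{s} \big|g((A^\transp)^s w) - 1\big| < \infty$ (supplied by the geometric contraction of $A^{-1}$, which plays here the role that Erd\H{o}s's Pisot condition plays for Bernoulli convolutions), since neither ingredient alone suffices. The passage from ``$\nu$ is not absolutely continuous'' to ``$\nu$ is singular'' also warrants the small purity argument above rather than being taken for granted.
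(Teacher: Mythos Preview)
Your proof is correct and follows essentially the same Erd\H{o}s-style Fourier argument as the paper: the product formula for $\hat\nu$ along the sequence $(A^\transp)^k w$, the stabilisation for large $k$ coming from $A^s u_j \in \Z^d$, absolute convergence of the remaining tail from the geometric decay of $\|A^s\|$ as $s\to-\infty$, and the appeal to Riemann--Lebesgue all match the paper's proof. The one substantive difference is the final step: the paper upgrades ``not absolutely continuous'' to ``singular'' simply by invoking the dichotomy of Proposition~\ref{pr:key}, whereas you give a self-contained purity argument via the Lebesgue decomposition and Hutchinson uniqueness; your route has the small advantage of not depending on the Lagarias--Wang input used in that proposition.
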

 \begin{proof}
 Fix a nonzero vector $w \in \Z^d$ and a tuple $(u_1,\ldots,u_N) \in\mathcal{V}_w$ throughout the proof, and let $(T_1,\ldots,T_N)$ be the associated iterated function system and $\nu$ the associated measure. We define the Fourier transform of $\nu$ by $\hat\nu(\xi):=\int_{\R^d} e^{i \langle x,\xi\rangle} d\nu(x)$ for every $\xi \in \R^d$. We will show that the limit $\lim_{r \to \infty} \hat\nu(2\pi (A^\transp)^r)$ exists and is nonzero, which by the Riemann-Lebesgue lemma implies that $\nu$ is not absolutely continuous with respect to Lebesgue measure. By Proposition \ref{pr:key} it will then follow that $\nu$ is singular. 

We first claim that the infinite product
\[\prod_{n=-\infty}^\infty \left(\frac{1}{N}\sum_{j=1}^N e^{2\pi i \langle A^{-n} u_j,w\rangle}\right)\]
is absolutely convergent and nonzero. Since $(u_1,\ldots,u_N) \in \mathcal{V}_w$ it is true by definition that every term in the product is nonzero. Since $A^m u_j \in \Z^d$ for all sufficiently large positive integers $m$, the terms in the product are also identically equal to $1$ for all large enough negative integers $n$. On the other hand we clearly have $\max_j |e^{2\pi i \langle A^{-n} u_j,w\rangle}-1|=O(\|A^{-n}\|)$ as $n \to +\infty$ by Lipschitz continuity. By Gelfand's formula the latter expression converges exponentially to zero as $n \to +\infty$ and this suffices to demonstrate the absolute convergence of the product. The claim is proved.

We next derive a formula for $\hat\nu(2\pi (A^\transp)^rw)$ for each $r \geq 1$. If $\xi \in \R^d$ is arbitrary then using the self-similarity relation for $\nu$,
\begin{align*}\hat\nu(\xi) &= \int_{\R^d} e^{i \langle x,\xi \rangle} d\nu(x) \\
&=\int_{\R^d}e^{i\langle x,\xi\rangle } d\left(\frac{1}{N}\sum_{j=1}^N (T_j)_*\nu\right)(x)\\
&=\frac{1}{N}\sum_{j=1}^N \int_{\R^d} e^{i\langle T_j x,\xi\rangle} d\nu(x)\\
&=\frac{1}{N}\sum_{j=1}^N \int_{\R^d}e^{i \langle A^{-1}x+u_j,\xi\rangle} d\nu(x)\\
&=\frac{1}{N}\sum_{j=1}^N e^{i \langle u_j, \xi\rangle}\int_{\R^d} e^{i\langle x,(A^\transp)^{-1}\xi\rangle} d\nu(x)=\frac{1}{N}\sum_{j=1}^N e^{i \langle u_j, \xi\rangle} \hat\nu\left((A^\transp)^{-1}\xi\right).\end{align*}
Iterating this relation $m$ times yields the formula
\[\hat\nu(\xi)= \prod_{n=0}^{m-1} \left(\frac{1}{N} \sum_{j=1}^N e^{i \langle A^{-n}u_j,\xi\rangle} \right)\hat\nu\left((A^\transp)^{-m}\xi\right)\]
valid for all $m\in \N$ and $\xi \in \R^d$. Since $\hat\nu$ is continuous and satisfies $\hat\nu(0)=1$ we deduce that for every integer $r \geq 0$
\begin{align*}\hat\nu\left(2\pi \left(A^\transp\right)^rw\right)&= \lim_{m \to \infty} \prod_{n=0}^{m-1} \left(\frac{1}{N} \sum_{j=1}^N e^{2\pi i \langle A^{r-n}u_j,w\rangle} \right)\hat\nu\left(2\pi \left(A^\transp\right)^{r-m}w\right)\\
&= \prod_{n=0}^\infty\left(\frac{1}{N} \sum_{j=1}^N  e^{2\pi i \langle A^{r-n}u_j,w\rangle}  \right)= \prod_{n=-r}^\infty\left(\frac{1}{N} \sum_{j=1}^N e^{2\pi i \langle A^{-n}u_j,w\rangle}  \right)\end{align*}
and therefore
\[\lim_{r \to \infty}\hat\nu\left(2\pi \left(A^\transp\right)^rw\right) = \prod_{n=-\infty}^\infty\left(\frac{1}{N} \sum_{j=1}^N e^{2\pi i \langle A^{-n}u_j,w\rangle}  \right)\]
which is well-defined and nonzero by our previous claim. Since $(A^\transp)^rw \to \infty$ in $\R^d$ as $r \to \infty$ it follows by the Riemann-Lebesgue lemma that $\nu$ is not absolutely continuous and the proof of the lemma is complete.
 \end{proof}
 \section{Acknowledgements}
This research was partially supported by the Leverhulme Trust (Research Project Grant RPG-2016-194). The author thanks Bal\'azs B\'ar\'any for helpful bibliographical suggestions.


\bibliographystyle{acm}
\bibliography{lapland}

\end{document}